    \def\ps@pprintTitle{%
  \def\@oddfoot{\reset@font\hfil\thepage\hfil\textit{\today}}
                       }
\newtheorem{theorem}{Theorem}
\newtheorem{lemma}[theorem]{Lemma} 
\newtheorem{corollary}[theorem]{Corollary}
\newtheorem{example}[theorem]{Example}
\newdefinition{definition}[theorem]{Definition}
\newdefinition{note}[theorem]{Note} 
\def\emphasize#1{\textit{#1}}
\newcommand{\dhalf}{\frac{d}{2}}                              
\newcommand{\dmhalf}{\frac{d-1}{2}}                           
\newcommand{\floor}[1]{\left\lfloor #1\right\rfloor}          
\newcommand{\lrexp}[2]{\left( #1\right)^{#2}}                 
\newcommand{\ke}{{q,\epsilon}}                                
\newcommand{\mb}[1]{M\!B_{d,#1}}                              
\newcommand{\wtimes}{-\mathrm{times}}
\def\correctedTV{black}
\def\correctedMA{black}
\def\correctedMAa{black}
\def\correctedMAu{black}
\journal{Discrete Mathematics}
\begin{document} 
\hyphenation{ge-ne-ra-tors ge-ne-ra-ting ge-ne-ra-ted}
\setcounter{MaxMatrixCols}{20}
\begin{frontmatter}
\title{Large Cayley digraphs and bipartite Cayley digraphs of odd diameters}

\author[ma]{Marcel~Abas\corref{cor1}}
\ead{abas@stuba.sk}

\author[tv]{Tom\'a\v s~Vetr\'\i k}
\ead{vetrikt@ufs.ac.za}

\cortext[cor1]{Corresponding author}
\address[ma]
        {Institute of Applied Informatics, Automation and Mechatronics,\\
         Faculty of Materials Science and Technology in Trnava,\\
         Slovak University of Technology in Bratislava,
         Trnava, Slovakia}
\address[tv]
        {Department of Mathematics and Applied Mathematics,\\
         University of the Free State, Bloemfontein, South Africa}              

\begin{abstract}\label{abstract}\noindent 
Let $C_{d,k}$ be the largest number of vertices in a Cayley digraph of degree $d$ and diameter $k$, and let $BC_{d,k}$ be the largest order of a bipartite Cayley digraph 
for given $d$ and $k$.
For every degree $d\geq2$ and for every odd $k$ we construct Cayley digraphs of order $2k\left(\lfloor\frac{d}{2}\rfloor\right)^k$ and diameter at most $k$, where $k\ge 3$, and bipartite Cayley digraphs of order $2(k-1)\left(\lfloor\frac{d}{2}\rfloor\right)^{k-1}$ and diameter at most $k$, where $k\ge 5$.
These constructions yield the bounds $C_{d,k} \ge 2k\left(\lfloor\frac{d}{2}\rfloor\right)^k$ for odd $k\ge 3$ and $d\ge \frac{3^{k}}{2k}+1$,
and $BC_{d,k} \ge 2(k-1)\left(\lfloor\frac{d}{2}\rfloor\right)^{k-1}$ for odd $k\ge 5$ and $d\ge \frac{3^{k-1}}{k-1}+1$.
Our constructions give the best currently known bounds on the orders of large Cayley digraphs and bipartite Cayley digraphs of given degree and odd diameter $k\ge 5$.
In our proofs we use new techniques based on properties of group automorphisms of direct products of abelian groups.
\end{abstract}

\begin{keyword}
Cayley digraph; Bipartite digraph; Degree; Diameter
\end{keyword}

\end{frontmatter}
\section{Introduction}

The study of large graphs of given degree and diameter has often been restricted to special classes of graphs. 
{\color{\correctedTV} A particularly useful class is that of Cayley graphs, since their inherent symmetry enables to calculate their diameter by determining distances just from one vertex.}

A directed Cayley graph (or simply Cayley digraph) $G=Cay(\Gamma,X)$ is specified by an \emphasize{underlying} group $\Gamma$ and by a unit-free \emphasize{generating set} $X$ for this group.
Vertices of $Cay(\Gamma,X)$ are the elements of $\Gamma$, that is the vertex set $V(G)$
{\color{\correctedTV} is equal to}
$\Gamma$, and there is a directed edge from the vertex $u$ to the vertex $v$ if and only if there is a generator $x \in X$ such that $ux = v$. Note that in a Cayley digraph all vertices have the same in-degree and out-degree, thus we will call this number the degree of the Cayley digraph. Since the mapping 
$\varphi_h: g\to hg, g\in V(G)$ is a digraph automorphism for every $h\in\Gamma$, Cayley digraphs are vertex-transitive.

We study Cayley digraphs of large order for given degree and diameter. Let $C_{d,k}$ be the largest number of vertices in a Cayley digraph of degree
$d$ and diameter $k$. Clearly, the number of vertices in a digraph of maximum degree $d$ and diameter $k$ can not exceed the 
\emphasize{Moore bound} $M_{d,k} = 1 + d + d^2 + \dots + d^k$ {\color{\correctedTV} and} therefore $C_{d,k} \le M_{d,k}$.
Complete digraphs are the largest Cayley digraphs of diameter $1$. They yield the result $C_{d,1} = M_{d,1}=d+1$.
\v Zd\'\i malov\'a and Stanekov\'a \cite{ZS2010} studied vertex-transitive digraphs of \cite{FMC1993} and showed that $C_{d,2} = d^2 + d$ for $d = q-1$, where $q \ge 3$ is a prime power, and $C_{d,3} \ge d^3 - d$, where $d \ge 3$ is a prime power. 
{\color{\correctedTV} More general mixed Cayley graphs of diameter $2$ were studied by \v Siagiov\'a \cite{s}.} 
In \cite{V2012} it was proved that $C_{d,k}\ge k \left(\floor{\frac{d}{2}}\right)^k$ for any $k\ge 3$ and $d\ge 4$. We improve this result for odd diameters. For odd $k\geq3$ and for $d\geq2$ we construct Cayley digraphs of order $2k\left(\floor{\frac{d}{2}}\right)^k$, degree $d$ and diameter at most $k$, and we show that $C_{d,k} \ge 2k\left(\floor{\frac{d}{2}}\right)^k$ for each $d\geq\frac{3^{k}}{2k}+1$.

Now let us consider bipartite Cayley digraphs. Let $BC_{d,k}$ denote the largest possible number of vertices in a bipartite Cayley
digraph of degree $d$ and diameter $k$. Aider \cite{A1992} showed that the number of vertices in a bipartite digraph is at most
$2(1+d^2+ \dots +d^{k-1})$ if $k$ is odd, and $2d(1+d^2+ \dots +d^{k-2})$ if $k$ is even. The largest Cayley digraphs of diameter $2$ are the complete bipartite digraphs, thus $BC_{d,2}=2d$ for any $d \ge 2$.  Constructions of bipartite Cayley digraphs presented in \cite{V2012} yield the bounds
$BC_{d,3}\ge 2d^2$ for $d\ge 2$, $BC_{d,k}\ge 2(k-1)\left(\floor{\frac{d}{2}}\right)^{k-1}$ if $k \ge 4$ is even and $d \ge 4$ and
$BC_{d,k}\ge (k-1)\left(\floor{\frac{d}{2}}\right)^{k-1}$ if $k\ge 5$ is odd and $d \ge 4$. 
We present a construction of bipartite Cayley digraphs of order $2(k-1)\left(\floor{\frac{d}{2}}\right)^{k-1}$, degree $d\geq2$ and diameter at most $k$, where $k\geq5$ is odd. For $d>\frac{3^{k-1}}{k-1}+1$ we obtain the bound $BC_{d,k} \ge 2(k-1)\left(\floor{\frac{d}{2}}\right)^{k-1}$.
\section{Preliminaries}\label{sec_preliminaries}

Let $H$ be any additive abelian group of order $n$ with unit element $0$ and let $H^k=H\times H\times\cdots\times H$ be the direct product of $k$ copies of $H$. Elements of $H^k$ {\color{\correctedTV} will be written in the form}
$\vec{h}=(h_1,h_2,\dots,h_k),\ h_i\in H$, $i\in\{1,2\dots,k\}$
{\color{\correctedTV} and the group operation will be defined by}
$\vec{h}\cdot\vec{h}'=(h_1,h_2,\dots,h_k)\cdot(h_1',h_2',\dots,h_k')=(h_1+h_1',h_2+h_2',\dots,h_k+h_k')$.

\begin{lemma}\label{lem_T_Dk}
Let $k=2q+\epsilon$, where $q\geq1$ is an integer, $\epsilon\in\{0,1\}$, $k\ne2$, and let $Sym(k)$ be the symmetric group on $k$ symbols $1,2,\dots,k$. 
Let $A_\ke=(1,2,3,\dots,k)$ be a $k$-cycle and let $B_\ke$ be products of the following disjunct transpositions:\\
$B_{q,0}=(1,q)(2,q-1)\dots(\frac{q}{2},\frac{q}{2}+1)(q+1,k)(q+2,k-1)\dots(\frac{3q}{2},\frac{3q}{2}+1)$,\\
$B_{q,1}=(1,q)(2,q-1)\dots(\frac{q}{2},\frac{q}{2}+1)(q+1,k)(q+2,k-1)\dots(\frac{3q}{2},\frac{3q}{2}+2)$,\\
for $q$ even, and\\
$B_{q,0}=(1,q)(2,q-1)\dots(\frac{q+1}{2}-1,\frac{q+1}{2}+1)(q+1,k)(q+2,k-1)\dots(\frac{3q+1}{2}-1,\frac{3q+1}{2}+1)$,\\
$B_{q,1}=(1,q)(2,q-1)\dots(\frac{q+1}{2}-1,\frac{q+1}{2}+1)(q+1,k)(q+2,k-1)\dots(\frac{3q+1}{2},\frac{3q+1}{2}+1)$,\\
for $q$ odd.\\ 
Then the group $T_\ke=\langle A_\ke, B_\ke\rangle$ is isomorphic to the dihedral group $D_k$ of order $2k$.
\end{lemma}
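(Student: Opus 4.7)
The plan is to verify that $A = A_{q,\epsilon}$ and $B = B_{q,\epsilon}$ satisfy the standard dihedral presentation $\langle a, b \mid a^k = b^2 = 1,\ bab = a^{-1}\rangle$ and then to rule out a collapse of $T_{q,\epsilon}$ to a proper quotient of $D_k$. The relation $A^k = \mathrm{id}$ is immediate because $A$ is a $k$-cycle, and $B^2 = \mathrm{id}$ holds because $B$ is a product of disjoint transpositions; the hypothesis $k \neq 2$ keeps $B$ well-defined and distinct from the identity.

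The technical core is the braid relation $BAB = A^{-1}$. I would prove it by reducing all four subcases ($q$ even/odd, $\epsilon \in \{0,1\}$) to a single uniform description of $B$ as a reflection on the cyclic set $\{1,\dots,k\}\cong\mathbb{Z}/k$:
\[
B(i) \equiv (q+1) - i \pmod k.
\]
Given this, and using $A(i) \equiv i+1 \pmod k$, a one-line calculation
\[
BAB(i) \equiv (q+1) - \bigl((q+1) - i + 1\bigr) \equiv i-1 \equiv A^{-1}(i) \pmod k
\]
settles the dihedral relation.

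The hard part will be verifying the uniform formula for $B$ across the four subcases, since the transposition lists in the definition have slightly different endpoints. The checks reduce to three observations per case: (i) every listed transposition $(i,j)$ in the first block satisfies $i+j = q+1$; (ii) every listed transposition $(i,j)$ in the second block satisfies $i+j = (q+1)+k$, equivalently $i+j \equiv q+1 \pmod k$; and (iii) any index unpaired by both blocks is a fixed point of the reflection, i.e.\ a solution of $2i \equiv q+1 \pmod k$ in $\{1,\dots,k\}$. A quick parity count shows that the number of such solutions matches the number of unpaired indices in each case: zero when $q$ is even and $\epsilon=0$, exactly one when $q$ is even and $\epsilon=1$ (namely $3q/2+1$) or when $q$ is odd and $\epsilon=1$ (namely $(q+1)/2$), and exactly two when $q$ is odd and $\epsilon=0$ (namely $(q+1)/2$ and $(3q+1)/2$).

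Finally, to conclude $T_{q,\epsilon} \cong D_k$ rather than a proper quotient, the relations already force $|T_{q,\epsilon}| \le 2k$. For the reverse inequality, $\langle A\rangle$ is cyclic of order $k$, and $B \notin \langle A\rangle$ because, as self-maps of $\mathbb{Z}/k$, every power of $A$ is a translation $i \mapsto i+j$ while $B$ is the inversion $i \mapsto (q+1)-i$; for $k \ge 3$ no translation equals an inversion. Hence $|T_{q,\epsilon}| = 2k$ and $T_{q,\epsilon} \cong D_k$.
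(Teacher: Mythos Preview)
Your proof is correct and follows the same overall strategy as the paper: verify the dihedral relations $A^k=B^2=\mathrm{id}$ and $BAB=A^{-1}$, then check that the resulting group has order exactly $2k$. Your uniform description $B(i)\equiv q+1-i\pmod{k}$ is a clean addition that renders the relation $BAB=A^{-1}$ a one-line computation, whereas the paper simply asserts all of this is ``easy to verify'' without supplying the calculation.
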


\begin{proof}
The dihedral group $D_k$ of order $2k$ has the standard presentation in the form $D_k=\langle A,B\vert A^k=B^2=1, BAB=A^{-1}\rangle$. It is easy to verify that $A^k_\ke=I$, where $k$ is the true order of $A_\ke$, $B^2_\ke=I$ and that 
$B_\ke A_\ke B_\ke=(A_\ke)^{-1}$. Since $A^i_\ke\ne A^j_\ke$ and 
$A^i_\ke B_\ke\ne A^j_\ke B_\ke$ for $i,j\in\{0,1,\dots,k-1\}$, $i\ne j$, the group $T_\ke$ has order $2k$. Therefore 
{\color{\correctedTV} the mapping} $\psi: T_\ke\to D_k$ given by $A_\ke\to A$ and $B_\ke\to B$ is a group isomorphism.
\end{proof}

\begin{example}\label{ex_epsilon}
For the first four values of $k$ we have:\\
$k=3$ ($q=1,\epsilon=1$), $A_{1,1}=(123)$, $B_{1,1}=(23)$,\\
$k=4$ ($q=2,\epsilon=0$), $A_{2,0}=(1234)$, $B_{2,0}=(12)(34)$,\\
$k=5$ ($q=2,\epsilon=1$), $A_{2,1}=(12345)$, $B_{2,1}=(12)(35)$,\\
$k=6$ ($q=3,\epsilon=0$), $A_{3,0}=(123456)$, $B_{3,0}=(13)(46)$.
\end{example}

In what {\color{\correctedTV} follows we} will write $D_k$ instead of $T_\ke$, where $D_k=T_{q,0}$ for $k$ even and $D_k=T_{q,1}$ for odd $k$. 
Let $\Gamma_k=H^k\rtimes_\varphi D_k$ be a semidirect product of the group $H^k$ and the group $D_k$  
{\color{\correctedTV} represented as} 
$T_\ke$. 
Note that to simplify the notation we will often omit the subscript $k$.
Elements of $\Gamma$ 
{\color{\correctedTV} will be written in the form} $\vec{h}\cdot C$, $\vec{h}\in H^k$, $C\in D_k$ and the product of two elements of $\Gamma$ is given by:
\begin{align}\label{eq_product}
\nonumber
\vec{h}C\cdot\vec{h}'C' &= (h_1,h_2,\dots,h_k)C\cdot(h_1',h_2',\dots,h_k')C'\\
                        &=(h_1+h'_{C(1)},h_2+h'_{C(2)},\dots,h_k+h'_{C(k)})CC',
\end{align}
\noindent
where $CC'$ is the product of $C$ and $C'$ in $D_k$.
{\color{\correctedTV} In the homomorphism $\varphi: D_k\to Aut(H^k)$, elements of $D_k$ induce permutations of {\color{\correctedMA} coordinates of} $\vec{h}$.}
If the order of $H$ is $n$, then the group $\Gamma$ has the order $\vert\Gamma\vert=2kn^k$. For an element $g=\vec{h}C$ of $\Gamma$ we say that $\vec{h}$ is the 
\emphasize{prefix} $P(g)$ of $g$ and that $C$ is the \emphasize{suffix} $S(g)$ of $g$. 

{\color{\correctedTV} In what follows} we will use the following notation for some special elements of $\Gamma$:
\begin{align}\label{eq_a_x}
        a(x) &= (x,0,\dots,0)A,\ x\in H,
\end{align}
where $A=A_\ke$ is the element of $T_\ke$ defined in Lemma \ref{lem_T_Dk}, and
\begin{align}\label{eq_b_x}
        b(x) &= (x,0,\dots,0,x,0,\dots,0)B,\ x\in H,
\end{align}
{\color{\correctedTV} where $x$ occurs in the first and $(q+1)$-st coordinate}
and $B=B_\ke$ is the element of $T_\ke$ defined in Lemma \ref{lem_T_Dk}.

Let $\Gamma=H^k\rtimes_\varphi D_k$ be the underlying group and let $X=\{a(x),b(x)\vert x\in H\}$ be the generating set for the Cayley digraph
$G=Cay(\Gamma,X)$. 
{\color{\correctedTV} The graph $G$ has degree $d=\vert X\vert=2n$ and order $2kn^k = 2k\left(\frac{d}{2}\right)^k$, if $d$ is even.}

{\color{\correctedMAa}
Let $w=C_1C_2\ldots C_k$, $C_i\in\{A,B\}$, $i\in\{1,2,\dots,k\}$, be a word in $D_k$.
We say that the product $C=C_1\cdot C_2\cdot\ldots\cdot C_k=V(w)$ is the \emphasize{value} of the word $w$.}
The \emphasize{fiber} over the word $w$ is the set of all sequences $\gamma_1,\gamma_2,\dots,\gamma_k$ such that $\gamma_i=a(x_i)$ if $C_i=A$ 
and $\gamma_i=b(x_i)$ if $C_i=B$, $x_i\in H$. We denote the set of sequences by $\vec{\gamma}_w$ or simply by $\vec{\gamma}$. For given 
$\vec{x}\in H^k$, the \emphasize{{\color{\correctedMA} value}} of $\vec{\gamma}_w(\vec{x})$ is the product 
$V(\vec{\gamma}(\vec{x}))=\gamma_1(x_1)\cdot\gamma_2(x_2)\dots\cdot\gamma_k(x_k)$.

Note that the previous definitions have a simple interpretation.
If $G=Cay(\Gamma,X)$ is a Cayley digraph and $w$ is a word in $D_k$, then the fiber $\vec{\gamma}_w$ over $w$ is the set of all oriented walks of length $k$ starting at $1_\Gamma$ and such that the vertices on the walks have suffices {\color{\correctedMAa} $C_1,C_1\cdot C_2,C_1\cdot C_2\cdot C_3,\dots,C=V(w)$}, respectively.

Since $V(\vec{\gamma}_w(\vec{x}))$ is an element of $\Gamma$, it has the form $V(\vec{\gamma}_w(\vec{x}))=\vec{z}\cdot C$, where $\vec{z}\in H^k$ and 
$C=V(w)\in D_k$. It is easy to see that for $\vec{z}$ we have $\vec{z}=(z_1,z_2,\dots,z_k)$, where 
$z_i=m_{i,1}x_1+m_{i,2}x_2+\dots+m_{i,k}x_k$, such that $m_{i,j}$ is either $0$ or $1$ and that $m_{1,j}+m_{2,j}+\dots+m_{k,j}$ is equal to $1$ if $\gamma_i$ is $a(x_i)$ or 2 if $\gamma_i$ is $b(x_i)$. Now, let
\begin{align}\label{eq_V}
\nonumber
V(\vec{\gamma}_w(\vec{x})) =(&m_{1,1}x_1+m_{1,2}x_2+\dots+m_{1,k}x_k,\\
\nonumber
                           &m_{2,1}x_1+m_{2,2}x_2+\dots+m_{2,k}x_k,\\
\nonumber                  &\vdots\\
                           &m_{k,1}x_1+m_{k,2}x_2+\dots+m_{k,k}x_k)\cdot C.                        
\end{align}
The \emphasize{fiber matrix over the word} $w$ is the square matrix $M=M(w)$ of degree $k$ and of the form
\begin{equation}\label{eq_matrix}
M(w)=
\begin{pmatrix}
  m_{1,1} & m_{1,2} & \hdots & m_{1,k} \\
  m_{2,1} & m_{2,2} & \hdots & m_{2,k} \\
  \vdots & \vdots & \ddots & \vdots \\
  m_{k,1} & m_{k,2} & \hdots & m_{k,k}
 \end{pmatrix}.
\end{equation}

Clearly, $i$-th row of $M$ corresponds to the $i$-th coordinate of $\vec{z}$ and $j$-th column of $M$ corresponds to the $j$-th generator of $\vec{\gamma}_w(\vec{x})$.

{\color{\correctedMA}
\begin{example}\label{ex_matrix}
Let $q=2$ and $\epsilon=1$. Then $k=2q+1=5$, $a(x)=(x,0,0,0,0)A=(x,0,0,0,0)(12345)$ and $b(x)=(x,0,x,0,0)B=(x,0,x,0,0)(12)(35)$. If, for example, $w=ABBAB$, then 
$\vec{\gamma}_w(\vec{x})=a(x_1)A,b(x_2)B,b(x_3)B,a(x_4)A,b(x_5)B$ and after a computation we have $V(\vec{\gamma}_w(\vec{x}))=a(x_1)A\cdot b(x_2)B \cdot b(x_3)B \cdot a(x_4)A\cdot b(x_5)B=(x_1+x_3,x_2+x_4,x_3+x_5,x_2,x_5)\cdot A^2B$. The fiber matrix over the word $w$ is
$M(w)=
\begin{pmatrix}
  1 & 0 & 1 & 0 & 0 \\
  0 & 1 & 0 & 1 & 0 \\
  0 & 0 & 1 & 0 & 1 \\
  0 & 1 & 0 & 0 & 0 \\
  0 & 0 & 0 & 0 & 1 
 \end{pmatrix}.
 $
\end{example}}

{\color{\correctedTV} One sees} that for given word $w$ and for the fiber matrix $M=M(w)$ over $w$ we have $P(V(\vec{\gamma}_w(\vec{x})))=(M\vec{x}^T)^T=\vec{x}M^T$ (note that $P(g)$ is the prefix of $g$).

\begin{definition}\label{def_cover}
We say that the fiber $\vec{\gamma}_w$ over the word $w$ \emphasize{covers} the element $C=V(w)$ of $D_k$ if for every
$\vec{y}=(y_1,y_2,\dots,y_k)\in H^k$ there is 
a vector $\vec{x}=(x_1,x_2,\dots,x_k)\in H^k$ such that $M\vec{x}^T=\vec{y}^T$.
\end{definition}

The definition says that $\vec{\gamma}_w$ covers $C=V(w)$ if for every vertex $g$ of $G=Cay(\Gamma,X)$ with suffix $S(g)=C$ there is an oriented walk of length $k$ from $1_\Gamma$ to the vertex $g$ such that the vertices on the walk have suffices {\color{\correctedMAa} $C_1,C_1\cdot C_2,\dots,C=V(w)$}.

\begin{lemma}\label{lem_one_x_cover}
Let $\vec{\gamma}_w$ cover $C=V(w)$ and let $\vec{y}_0\in H^k$. Then there exists exactly one $\vec{x}_0\in H^k$ such that 
$M\vec{x}_0^T=\vec{y}_0^T$.
\end{lemma}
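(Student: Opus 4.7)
The plan is to recognize that the equation $M\vec{x}^T=\vec{y}^T$ defines a self-map of the finite group $H^k$, and then deduce uniqueness from surjectivity by a pigeonhole-type argument.

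First I would fix the framework: define the map $\phi\colon H^k\to H^k$ by $\phi(\vec{x})=(M\vec{x}^T)^T$. Since every entry $m_{i,j}$ of $M$ belongs to $\{0,1\}$, each coordinate of $\phi(\vec{x})$ is a sum of a subset of the $x_j$'s, so the expression makes sense even though $H$ is only required to be an abelian group (not a ring). Because $H$ is abelian, addition is coordinatewise, and the formula shows $\phi(\vec{x}+\vec{x}')=\phi(\vec{x})+\phi(\vec{x}')$; hence $\phi$ is a group homomorphism from $H^k$ to itself.

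Next I would translate the hypothesis. The assumption that $\vec{\gamma}_w$ covers $C=V(w)$ is, by Definition \ref{def_cover}, exactly the statement that for every $\vec{y}\in H^k$ there is some $\vec{x}\in H^k$ with $M\vec{x}^T=\vec{y}^T$; that is, $\phi$ is surjective. Since $|H^k|=n^k$ is finite, a surjective map from a finite set to itself is automatically a bijection. Equivalently, one can invoke the first isomorphism theorem: $|H^k|/|\ker\phi|=|\mathrm{Im}\,\phi|=|H^k|$ forces $\ker\phi$ to be trivial, so $\phi$ is injective.

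Given $\vec{y}_0\in H^k$, existence of $\vec{x}_0$ with $\phi(\vec{x}_0)=\vec{y}_0$ is the covering hypothesis, and uniqueness is injectivity of $\phi$. I do not foresee a real obstacle; the only point worth being explicit about is that additivity of $\phi$ relies on $H$ being abelian and on the entries of $M$ lying in $\{0,1\}$, both of which are guaranteed by the setup preceding Definition \ref{def_cover}.
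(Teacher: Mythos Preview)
Your proposal is correct and is essentially the same argument as the paper's: both deduce uniqueness from surjectivity of the self-map $\vec{x}\mapsto (M\vec{x}^T)^T$ on the finite set $H^k$ via pigeonhole. The paper merely phrases the counting in the language of walks in the Cayley digraph (there are $n^k$ vertices with suffix $C$ and at most $n^k$ walks over $w$), whereas you work directly with the map $\phi$; your additional remark that $\phi$ is a homomorphism is not needed here but anticipates the paper's Corollary~\ref{cor_one_y_cover}.
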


\begin{proof}
Since $\vec{\gamma}_w$ covers $C$, for every $\vec{y}_0\in H^k$ there exists a vector $\vec{x}_0\in H^k$ such that 
$M\vec{x}_0^T=\vec{y}_0^T$. Now we show that this $\vec{x}_0$ is unique.

In the Cayley digraph $G=Cay(\Gamma,X)$ there are exactly $n^k$ vertices of the form $\vec{x}C$. The lemma in fact says that for every vertex $g\in G$ of the form $g=\vec{x}C$ there is exactly one oriented walk from $1_\Gamma$ to $g$ such that the suffices of the 
vertices $g_1,g_2,\dots,g_k$, where $g_k=g$, on the walk are the elements {\color{\correctedMAa} $C_1,C_1\cdot C_2,\dots,C=V(w)$} of $D_k$. Since the group $H$ has order $n$, there is at most $n^k$ terminal vertices of $\vec{\gamma}_w$. On the other hand, every vertex with suffix $C$ (its number is also $n^k$) is 
reachable from $1_\Gamma$ by a walk $\vec{\gamma}_w(\vec{x}_0)$ (since $\vec{\gamma}$ covers $C$).
\end{proof}

\begin{corollary}\label{cor_one_y_cover}
Let $\vec{\gamma}_w$ {\color{\correctedTV} cover} $C=V(w)$ and let $M$ be the fiber matrix over $w$. Then the mapping $\mu: H^k\to H^k$ given by $\mu(\vec{x})=\vec{x}M^T$ is a group automorphism.
\end{corollary}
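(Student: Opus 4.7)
The plan is to reduce the statement to two observations: that $\mu$ respects the group operation of $H^k$, and that $\mu$ is a bijection, the latter being essentially a restatement of Lemma \ref{lem_one_x_cover}.

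First, I would note that the entries $m_{i,j}$ of $M$ lie in $\{0,1\}$, so each coordinate of $\mu(\vec{x}) = \vec{x}M^T$ is a $\mathbb{Z}$-linear combination of coordinates of $\vec{x}$. Since $H$ is abelian, viewing $H^k$ as a $\mathbb{Z}$-module makes any such $\{0,1\}$-matrix action a group homomorphism. Concretely, for $\vec{x},\vec{x}'\in H^k$, the $i$-th coordinate of $\mu(\vec{x}\cdot\vec{x}')$ is $\sum_j m_{i,j}(x_j+x_j') = \sum_j m_{i,j}x_j + \sum_j m_{i,j}x_j'$, which is the $i$-th coordinate of $\mu(\vec{x})\cdot\mu(\vec{x}')$. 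Hence $\mu$ is a homomorphism of $H^k$.

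Next, I would invoke Lemma \ref{lem_one_x_cover}: the hypothesis that $\vec{\gamma}_w$ covers $C$ gives, for every $\vec{y}\in H^k$, a unique $\vec{x}\in H^k$ with $M\vec{x}^T = \vec{y}^T$. Transposing, this says that for every $\vec{y}\in H^k$ there is a unique $\vec{x}\in H^k$ with $\vec{x}M^T = \vec{y}$, i.e.\ $\mu$ is a bijection of $H^k$ onto itself. Combined with the previous paragraph, $\mu$ is a bijective group homomorphism, hence an automorphism of $H^k$, which is what the corollary claims.

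The main (and essentially only) obstacle is simply to be careful about the transpose when passing between the ``column'' formulation used in Definition \ref{def_cover} and Lemma \ref{lem_one_x_cover} and the ``row'' formulation $\mu(\vec{x}) = \vec{x}M^T$ used in the corollary; once this is made explicit, no further computation is required, and finiteness of $H^k$ even makes it enough to verify injectivity or surjectivity alone.
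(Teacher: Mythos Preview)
Your proof is correct and follows essentially the same approach as the paper: establish that $\mu$ is a homomorphism by linearity of the matrix action, and use Definition~\ref{def_cover} together with Lemma~\ref{lem_one_x_cover} to obtain bijectivity. The paper treats bijectivity before the homomorphism property and is slightly less explicit about the transpose bookkeeping, but the substance is the same.
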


\begin{proof}
By Definition \ref{def_cover}, for every $\vec{y}\in H^k$ there is a vector $\vec{x}\in H^k$ such that $\mu(\vec{x})=\vec{y}$ and by Lemma \ref{lem_one_x_cover} the mapping $\mu$ is a bijection. It is easy to see that for $\vec{x}=\vec{x}_1+\vec{x}_2$ we have 
$\mu(\vec{x}_1+\vec{x}_2)=\mu(\vec{x})=\vec{x}M^T=(\vec{x}_1+\vec{x}_2)M^T=\vec{x}_1M^T+\vec{x}_2M^T=\mu(\vec{x}_1)+\mu(\vec{x}_2)$. Therefore $\mu$ is a bijective group endomorphism of the group $H^k$ and therefore a group automorphism.
\end{proof}

\begin{definition}\label{def_type}
We say that a column $j$ (row $i$) of a matrix $M$ is of \emphasize{type I} if it contains exactly one unit and it is of \emphasize{type II} 
if it contains two (two or more) $1$'s.
\end{definition}

\begin{lemma}\label{lem_type_I}
Let $M$ be {\color{\correctedTV} a zero-one matrix} of order $k\geq 1$ with $det(M)=\pm1$ such that in any column of $M$ there are at most two $1$'s. 
Then there is at least one row of type I in $M$.
\end{lemma}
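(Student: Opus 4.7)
The plan is to argue by contradiction. Suppose that no row is of type I, so every row contains at least two $1$'s. Then a double counting of the $1$'s in $M$ gives at least $2k$ from the row bound and at most $2k$ from the column bound, which forces equality throughout. Hence every row and every column of $M$ must contain exactly two $1$'s.

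Having reduced to the case of a zero-one matrix with constant row and column sums equal to $2$, I would next look at $M$ modulo $2$. Because each row of $M$ sums to $2$, the all-ones column vector $\mathbf{1}=(1,1,\dots,1)^T$ satisfies $M\mathbf{1}=2\cdot\mathbf{1}$, which reduces to the zero vector modulo $2$. Thus $M$ is singular over the field with two elements, so $\det(M)$ must be even, contradicting $\det(M)=\pm 1$.

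The argument is short and I do not expect a serious obstacle. A small thing worth noting is that rows of all zeros are excluded ``for free'' by $\det(M)\neq 0$, so the contradiction hypothesis really does yield at least two $1$'s per row. The underlying idea is simply that the column restriction together with the absence of type I rows forces an even row-sum pattern, and this is incompatible with an odd determinant.
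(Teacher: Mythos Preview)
Your proof is correct and follows essentially the same line as the paper's: assume no type~I row, use double counting against the column bound to force all row and column sums equal to $2$, and then conclude that the determinant is even, contradicting $\det(M)=\pm1$. Your presentation is in fact a bit cleaner---you handle all $k$ at once rather than splitting off small cases, and you make the parity step explicit via $M\mathbf{1}\equiv 0\pmod 2$, whereas the paper simply asserts that such a matrix has even determinant.
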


\begin{proof} 
{\color{\correctedTV} Let $l\in \{ 0, \dots , k \}$ be} the number of columns of type II in $M$. The assertion is true for $k=1$ and for every $k>1$, $l<k$. For $k=l=2$ the determinant {\color{\correctedTV} of $M$} is equal to $0$. Now let $l=k\geq3$. If there is no row of type I, then in every row of the matrix there are exactly two '1'-s. Therefore we have a zero-one matrix of order $k\geq 3$ with equal row and column sum $2$. It is easy to verify that the determinant of such a matrix is an even number -- a contradiction.
\end{proof}
 
\begin{lemma}\label{lem_regular_covers}
Let $w$ be a word in $D_k$ and let the fiber matrix $M=M(w)$ be a matrix with $det(M)=\pm1$. Then $\vec{\gamma}_w$ covers $C=V(w)$. 
\end{lemma}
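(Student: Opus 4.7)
The plan is to reduce the claim to the statement that an integer matrix with determinant $\pm 1$ is invertible over $\mathbb{Z}$, and hence induces an automorphism of $H^k$ for every abelian group $H$. Recall that by construction $M$ is a zero–one matrix (each entry is $0$ or $1$), so in particular $M \in \mathbb{Z}^{k\times k}$. Our goal, by Definition \ref{def_cover}, is: for every $\vec{y}\in H^k$, find $\vec{x}\in H^k$ with $M\vec{x}^T=\vec{y}^T$.

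First, I would observe that since $\det(M)=\pm1$, Cramer's rule gives $M^{-1}=\pm\,\mathrm{adj}(M)$, and the adjugate of an integer matrix is again an integer matrix. Hence $M^{-1}\in\mathbb{Z}^{k\times k}$. This is the only genuine arithmetic fact used in the argument, and it is the reason the hypothesis on the determinant is imposed.

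Next, I would invoke the $\mathbb{Z}$-module structure on $H$. Because $H$ is an additive abelian group, the expressions $m_{i,1}x_1+\dots+m_{i,k}x_k$ with integer coefficients $m_{i,j}$ are well defined elements of $H$; in other words, any integer matrix acts on $H^k$ by the usual formula. Applying this to both $M$ and $M^{-1}$, one obtains two maps $H^k\to H^k$ whose composition (in either order) is the identity, because matrix products and sums are computed the same way in $\mathbb{Z}$ and in $\mathrm{End}(H^k)$. Thus left-multiplication by $M$ is a bijection on $H^k$.

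Finally, given any $\vec{y}\in H^k$, I would simply set $\vec{x}^T:=M^{-1}\vec{y}^T$, which lies in $H^k$ by the previous paragraph and satisfies $M\vec{x}^T=\vec{y}^T$. This verifies the condition of Definition \ref{def_cover}, so $\vec{\gamma}_w$ covers $C=V(w)$. There is no real obstacle here; the only point to be careful about is to state explicitly that the action of an integer matrix on $H^k$ is well defined and that unimodularity over $\mathbb{Z}$ transfers to invertibility of this action on an arbitrary abelian group $H$.
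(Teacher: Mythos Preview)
Your proof is correct and is in fact cleaner and more general than the paper's. You invoke the standard algebraic fact that a unimodular integer matrix has an integer inverse (via the adjugate), and that the ring map $\mathbb{Z}^{k\times k}\to\mathrm{End}(H^k)$ coming from the $\mathbb{Z}$-module structure of $H$ is a homomorphism, so $MM^{-1}=I$ over $\mathbb{Z}$ transfers to invertibility of the induced action on $H^k$. This gives surjectivity (indeed bijectivity) directly and works for an arbitrary abelian group $H$.

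The paper takes a different, more combinatorial route. It exploits the special shape of $M$ --- a zero-one matrix with at most two $1$'s in each column --- and the auxiliary Lemma~\ref{lem_type_I}, which guarantees that such a matrix with determinant $\pm1$ has a row containing a single $1$. The paper then argues injectivity of $\vec{x}\mapsto M\vec{x}^T$ by an elimination process: the row of type~I pins one coordinate of any kernel element to $0$, one deletes that row and column, and iterates on the resulting $(k-1)\times(k-1)$ matrix (which again satisfies the hypotheses). Surjectivity then follows implicitly from the finiteness of $H^k$. What the paper's approach buys is a self-contained combinatorial argument that avoids any appeal to the adjugate or to module theory, at the cost of needing Lemma~\ref{lem_type_I} and the column-sum hypothesis; what your approach buys is a one-line reduction to a classical fact, with no use of the special form of $M$, no need for Lemma~\ref{lem_type_I}, and no reliance on $H$ being finite.
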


\begin{proof}
It is sufficient to show that for every pair of different elements $\vec{x}_0\ne\vec{x}_1$ of $H^k$ we have $M\vec{x}_0^T\ne M\vec{x}_1^T$. We prove the lemma by a contradiction. Let $\vec{x}_0\ne\vec{x}_1$ and $\vec{y}_0$ in $H^k$ be such that $M\vec{x}_0^T=M\vec{x}_1^T=\vec{y}_0^T$. That is, $M\vec{x}^T=\vec{0}^T$ for some non-zero vector $\vec{x}$ of $H^k$. Since $M$ is a zero-one matrix with determinant $\pm1$ and such that there are at most two {\color{\correctedTV} $1$'s} in any column of $M$, there is (by Lemma \ref{lem_type_I}) at least one row, say $i$, of type I in $M$. Let the {\color{\correctedTV} 1} of this row be in a column $j$. 
{\color{\correctedTV} The $1$ is the only $1$ in the row, thus} from the definition of fiber matrix it follows that to get $y_i=0$ we have to set $x_j=0$.  Now we can omit the row $i$ and the column $j$ to get a new matrix $M'$ (corresponding to a new mapping $M': H^{k-1}\to H^{k-1}$). Since $M'$ is again a zero-one matrix with determinant $\pm1$ and with at most two $1$'s in any column, there is at least one row of type I in $M'$ and we can continue with the process. In this way, after $(k-1)$ steps, we get $1\times 1$ matrix $\tilde{M}$ with entry $\tilde{m}_{1,1}=1$. The entry corresponds to a row $i'$ and column $j'$ of $M$. To obtain $y_{i'}=0$ we have to set $x_{j'}=0$. Therefore $\vec{x}$ is the zero vector of $H^k$ -- a contradiction.
\end{proof}

\begin{note}
We have already seen that if $\vec{\gamma}_w$ covers $V(w)$, then the mapping $\mu(\vec{x})=\vec{x}M^T$ is a group automorphism. Lemma 
\ref{lem_regular_covers} in fact says, that if $M$ {\color{\correctedTV} has $det(M)=\pm1$, then the mapping $\mu(\vec{x})=\vec{x}M^T$ is a group automorphism 
{\color{\correctedMA} as well}.}
\end{note}
\section{Results}\label{sec_results}

In this section we study products of $k=2q+\epsilon,\ \epsilon\in\{0,1\}$, generators (elements of $X$) 
in terms of matrices $M(w)$ over the corresponding word $w$. {\color{\correctedTV} More precisely,} for a given set $S$ of elements of $D_k$, for each $C\in S$ we find a word $w$ in $D_k$ such that $V(w)=C$ and the fiber matrix $M$ over $w$ has determinant equal to $\pm1$.

To simplify the notation, {\color{\correctedTV} in the lemmas that follow we will write $a(i)$ for the generator which} yields the column of $M$ with the $i$-th entry equal to 
$1$, $1 \le i \le k$. Similarly, we will write $b(j,j')$ for the generator with the $j$-th and $j'$-th entry equal to $1$, $1 \le j < j' \le k$.
For example, let us have $q=2$, $\epsilon=1$ and $w=ABBAB$ as in Example \ref{ex_matrix}. Then 
$a(x_1)b(x_2)b(x_3)a(x_4)b(x_5)$ corresponds to $(x_1+x_3,x_2+x_4,x_3+x_5,x_2,x_5)A^2B$ and we simply write 
$\vec{\gamma}_w\to a(1)b(2,4)b(1,3)a(2)b(3,5)$ instead. Note that $i,j$ and $j'$ in $a(i)$ and $b(j, j')$ are determined by the product of {\color{\correctedTV} preceding} generators. 

\subsection{Cayley digraphs}\label{subsec_digraphs}
In the next lemma, for any element $A^p B^l$ of the group $D_k$, $k$ odd, $p\in\{1,2,\dots,k\}$ and $l\in\{0,1\}$, we present a sequence of $k$ generators (a word in $D_k$) whose product is the element with suffix $A^p B^l$ and the corresponding matrix $M$ has determinant $\pm1$.

\begin{lemma}\label{lem_eps_1}
Let $q\geq1$ be an integer and let $k=2q+1$. Then for every $C$ in $D_k$ there is a word $w$ in $D_k$ such that $V(w)=C$ and the fiber matrix $M(w)$ over the word $w$ is {\color{\correctedTV} a matrix with} $det(M)=\pm1$.
\end{lemma}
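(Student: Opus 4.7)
The plan is to construct, for each of the $2k$ elements $C \in D_k$, an explicit word $w \in \{A,B\}^k$ such that $V(w) = C$ and the fiber matrix $M(w)$ is unimodular. Every element of $D_k$ has the canonical form $A^p B^l$ with $p \in \{0,\dots,k-1\}$ and $l \in \{0,1\}$; since the parity of the number of $B$'s in $w$ must equal $l$, I will split the construction into three cases according to the target coset and exponent.

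First, for $C = I$ I take $w = A^k$: because $A$ is the $k$-cycle and each $\gamma_i = a(x_i)$ contributes a single $1$ at row $\pi_{i-1}(1) = A^{i-1}(1) = i$, the matrix $M(w)$ is the $k\times k$ identity, so $\det M = 1$. Next, for $C = A^p B$ I try the one-$B$ template $w = A^{j_1} B A^{j_2}$ with $j_1 + j_2 = k-1$; using the relation $BA^jB = A^{-j}$ one gets $V(w) = A^{j_1-j_2} B$, and since $k$ is odd and $\gcd(2,k) = 1$ the congruence $j_1 - j_2 \equiv p \pmod{k}$ is always solvable. Finally, for $C = A^p$ with $p \ne 0$ I try the two-$B$ template $w = A^{j_1} B A^{j_2} B A^{j_3}$ with $j_1+j_2+j_3 = k-2$, giving $V(w) = A^{j_1-j_2+j_3}$; again $\gcd(2,k) = 1$ ensures that every nonzero residue of $p$ mod $k$ is attainable. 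When a template accidentally produces a row collision (and hence a singular matrix), I will upgrade to a three-$B$ or four-$B$ variant of the same shape and re-solve the corresponding arithmetic condition on the chunk lengths.

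The main obstacle, and the technical heart of the proof, is verifying $\det M(w) = \pm 1$ for the chosen word. Each type-I column contributes a single $1$ at row $\pi_{i-1}(1)$ and each type-II column contributes two $1$'s at rows $\pi_{i-1}(1), \pi_{i-1}(q+1)$; the exact rows that get hit depend on the interplay between cyclic shifts by $A$ and the fixed involution $B = B_{q,1}$ from Lemma~\ref{lem_T_Dk}, which has a single fixed point when $k$ is odd. My plan is to verify invertibility by iterated cofactor expansion along type-I columns: in view of Lemma~\ref{lem_type_I}, the existence of a row of type~I lets us peel off a $1$ on a pivot and reduce to a smaller problem of the same shape, ultimately collapsing the computation to a small submatrix corresponding to the rows and columns touched by the $B$-generators, whose determinant can be evaluated directly. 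The delicate bookkeeping concerns choosing the template parameters $j_1, j_2, \dots$ so that (i) no two type-I columns collide on the same row, and (ii) the two rows $\pi_{i-1}(1), \pi_{i-1}(q+1)$ of each type-II column together cover exactly the rows missed by the type-I columns. Both conditions depend on the concrete action of $B$ and on the residue of $p$ mod $k$, so I anticipate the proof splitting into several parallel subcases, each settled by a direct computation in the symmetric group on the $k$ symbols $\{1,\dots,k\}$.
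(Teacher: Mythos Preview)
Your outline matches the paper's strategy at the high level --- choose a word $w$ of length $k$ with the correct value and then verify $\det M(w)=\pm1$ by stripping away type-I columns --- but the concrete templates you propose do not work in general, and the ``upgrade'' step you defer to is exactly where the content of the proof lies.

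Here is a failure of the one-$B$ template. Take $k=5$ (so $q=2$), target $C=A^3B$. Your congruence forces $j_1=1$, $j_2=3$, i.e.\ $w=ABA^3$. Tracing the row indices (using the paper's convention, cf.\ Example~\ref{ex_matrix}): the first $a$ hits row $1$; the $b$ hits rows $A(1)=2$ and $A(3)=4$; the next $a$ has prefix $AB$ and hits row $(AB)(1)=B(A(1))=B(2)=1$. Thus columns $1$ and $3$ of $M(w)$ are both $e_1$, and $\det M(w)=0$. The two-$B$ template for the rotations has the same disease: with two $b$-columns present, avoiding repeated $a$-rows is not enough, since the residual $2\times2$ block coming from the $b$'s can itself be singular depending on how the $B$-reflection interacts with the chosen chunk lengths.

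The paper does not try to salvage the minimal-$B$ templates. Instead it writes down, in eight cases indexed by the parity of $p$, the range of $p$ relative to $q$, and the value of $l\in\{0,1\}$, explicit words built from blocks $AABB$ and $ABBA$ sandwiched between runs of $A$'s and isolated $B$'s. These blocks are engineered so that after deleting all type-I rows and columns the residual matrix $M'$ is a permutation matrix (in one case a permutation matrix with a single extra $1$), whence $\det M=\pm1$. Your plan becomes a proof only once you carry out an equivalent case split and exhibit, for each case, a word whose residual block is provably nonsingular; the vague promise to ``upgrade to a three-$B$ or four-$B$ variant'' does not discharge this obligation.
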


\begin{proof}
All the elements of $D_k$ are of the form $A^pB^l$, $p\in\{1,2,\dots,k\}$, $l\in\{0,1\}$ and we distinguish eight cases i), ii),\dots, viii), depending on the values of $p$ and $l$. In the first four cases $V(w)=A^p$ and in the last four cases $V(w)=A^pB$.\\\\
i) $l=0$, $p$ even, $q+1\leq p\leq 2q$, $V(w)=A^p$, where {\color{\correctedMAu} \\\phantom{i)} 
   $w=B\underbrace{AABB}_{[(q-p/2)\wtimes]}\underbrace{A}_{[(p-q)\wtimes]}B\underbrace{A}_{[(p-q-1)\wtimes]}$.}
\begin{eqnarray}\label{1_i}
\vec{\gamma}_w&\to& b(1, q+1) \nonumber \\
              && [a(q) a(q - 1) b(q-2, 2q-1)b(q-1, 2q) \nonumber \\            
              &&  a(q-2) a(q-3) b(q-4, 2q-3) b(q-3, 2q-2) \nonumber \\
              &&  \vdots \nonumber \\
              &&  a(p-q+2) a(p-q+1) b(p-q, p+1) b(p-q+1, p+2) ] \nonumber \\
              &&  a(p-q) a(p-q-1) \dots a(1) \nonumber \\
              &&  b(q+1, 2q+1) \nonumber \\
              &&  a(q+2) a(q+3) \dots a(p)
\end{eqnarray}
Consider {\color{\correctedTV} a} column of $M$ {\color{\correctedTV} containing} only one entry $1$. If we remove the row and column in which this entry appears, we obtain a new square matrix of degree $2q = k-1$ whose determinant is equal to $\pm det(M)$. Since there are $(p-1)$ generators $a(i)$ in the expression
(\ref{1_i}), we have $(p-1)$ columns with the only non-zero entry. All these $(p-1)$ non-zero entries are in different rows. We delete rows and columns which correspond to these non-zero entries to get a new square matrix, say $M'$, of degree $2q-p+2$. We have $det(M') = \pm det(M)$. 
Note that the new matrix $M'$ contains the rows $q+1, p+1, p+2, \dots 2q+1$ of the matrix $M$ and the only generator which yields a column of $M'$ with two entries equal to $1$ is $b(q+1, 2q+1)$. The generator $b(q+1, 2q+1)$ corresponds to the last $(2q-p+2)$-th column of $M'$ and the $(q+1)$-th row of $M$ corresponds to the first row of $M'$. Except for the first row and the last column of $M'$, every row and every column of $M'$ contains exactly one non-zero entry, which implies that $det(M')= \pm 1$.

{\color{\correctedTV} To give an example of this situation, let} $q=5$ and $p=6$. Then $k=11$,\\ $w=BAABBAABBAB$, $V(w)=A^6$,\\ 
$\vec{\gamma}_w\to b(1,6)a(5)a(4)b(3,9)b(4,10)a(3)a(2)b(1,7)b(2,8)a(1)b(6,11)$,\\\noindent

$M=\begin{pmatrix}
  1 & 0 & 0 & 0 & 0 & 0 & 0 & 1 & 0 & 1 & 0 \\
  0 & 0 & 0 & 0 & 0 & 0 & 1 & 0 & 1 & 0 & 0 \\
  0 & 0 & 0 & 1 & 0 & 1 & 0 & 0 & 0 & 0 & 0 \\
  0 & 0 & 1 & 0 & 1 & 0 & 0 & 0 & 0 & 0 & 0 \\
  0 & 1 & 0 & 0 & 0 & 0 & 0 & 0 & 0 & 0 & 0 \\
  1 & 0 & 0 & 0 & 0 & 0 & 0 & 0 & 0 & 0 & 1 \\
  0 & 0 & 0 & 0 & 0 & 0 & 0 & 1 & 0 & 0 & 0 \\
  0 & 0 & 0 & 0 & 0 & 0 & 0 & 0 & 1 & 0 & 0 \\
  0 & 0 & 0 & 1 & 0 & 0 & 0 & 0 & 0 & 0 & 0 \\
  0 & 0 & 0 & 0 & 1 & 0 & 0 & 0 & 0 & 0 & 0 \\
  0 & 0 & 0 & 0 & 0 & 0 & 0 & 0 & 0 & 0 & 1
\end{pmatrix}$
and
$M'=\begin{pmatrix}
  1 & 0 & 0 & 0 & 0 & 1 \\
  0 & 0 & 0 & 1 & 0 & 0 \\
  0 & 0 & 0 & 0 & 1 & 0 \\
  0 & 1 & 0 & 0 & 0 & 0 \\
  0 & 0 & 1 & 0 & 0 & 0 \\
  0 & 0 & 0 & 0 & 0 & 1 
\end{pmatrix}$.\\
\\
ii) $l=0$, $p$ even, $2\leq p\leq q$, $V(w)=A^p$, where {\color{\correctedMAu} \\\phantom{ii)} 
   $w=B\underbrace{A}_{[(q-p+2)\wtimes]}\underbrace{ABBA}_{[(p/2-1)\wtimes]}\underbrace{A}_{[(q-p+1)\wtimes]}B$.}
\begin{eqnarray}\label{1_ii}
\vec{\gamma}_w&\to& b(1, q+1) \nonumber \\
              &&  a(q) a(q-1) \dots a(p-1) \nonumber \\
              && [a(p-2) b(p-3, p+q-2) b(p-2, p+q-1) a(p-3) \nonumber \\            
              &&  a(p-4) b(p-5, p+q-4) b(p-4, p+q-3) a(p-5) \nonumber \\
              &&  \vdots \nonumber \\
              &&  a(2) b(1,q+2) b(2,q+3) a(1) ] \nonumber \\
              &&  a(2q+1) a(2q) \dots a(p+q+1) \nonumber \\
              &&  b(p,p+q)
\end{eqnarray}
Let us note that non-zero entries in the matrix $M$ {\color{\correctedTV} corresponding} to the generators $a(i)$ appear in the rows: $1, 2, \dots , q$ and $p+q+1, p+q+2, \dots , 2q+1$. We remove these rows from $M$ together with columns {\color{\correctedTV} corresponding} to generators $a(i)$ to create a square matrix $M'$ of order $p$, such that
$det(M') = \pm det(M)$. 

For every generator $b(j,j')$ {\color{\correctedTV} appearing} in the expression
(\ref{1_ii}), $j$-th row does not appear in $M'$ and $j'$-th row appears in $M'$. It follows that each column of $M'$ contains exactly one non-zero entry. Since any two $j'$-s are different, each row of $M'$ contains exactly one non-zero entry  {\color{\correctedTV} as well.} Hence $det(M')=\pm1$.\\\\
iii) $l=0$, $p$ odd, $q+1\leq p\leq 2q+1$, $V(w)=A^p$, where {\color{\correctedMAu} \\\phantom{iii)} 
   $w=\underbrace{A}_{[(p-q-1)\wtimes]}\underbrace{AABB}_{[(q+1-(p+1)/2)\wtimes]}\underbrace{A}_{[(p-q)\wtimes]}B$.}
\begin{eqnarray}\label{1_iii}
\vec{\gamma}_w&\to& a(1) a(2) \dots a(p-q-1) \nonumber \\
              && [ a(p-q) a(p-q+1) b(p-q+2, p+2) b(p-q+1, p+1) \nonumber \\            
              &&  a(p-q+2) a(p-q+3) b(p-q+4, p+4) b(p-q+3, p+3) \nonumber \\
              &&  \vdots \nonumber \\
              &&  a(q-1) a(q) b(q+1, 2q+1)b(q, 2q) ] \nonumber \\              
              &&  a(q+1) a(q+2) \dots a(p)
\end{eqnarray}
We study the matrix $M$ {\color{\correctedTV} corresponding} 
to this product of generators.
We focus on those entries $1$ in the matrix $M$, which lie in columns corresponding to the generators $a(i)$. We again create a new matrix $M'$ by removal of columns and rows of $M$, in which these entries $1$ appear to obtain a new square matrix of degree 
$2q-p+1$. Note that we removed first $p$ rows of $M$. It follows that
$det(M') = \pm det(M)$. 
The matrix $M'$ has exactly one non-zero entry in each column and in each row, 
{\color{\correctedTV} and so}
$det(M')= \pm 1$.\\\\
iv) $l=0$, $p$ odd, $1\leq p\leq q$, $V(w)=A^p$, where {\color{\correctedMAu}  \\\phantom{iv)} 
   $w=B\underbrace{A}_{[(q-p)\wtimes]}B\underbrace{A}_{[(q-p)\wtimes]}\underbrace{AABB}_{[((p-1)/2)\wtimes]}A$.}
\begin{eqnarray}\label{1_iv}
\vec{\gamma}_w&\to& b(1,q+1) \nonumber \\
              && a(q) a(q-1) \dots a(p+1) \nonumber \\
              && b(p, p+q+1) \nonumber \\
              && a(p+q+2) a(p+q+3) \dots a(2q+1) \nonumber \\
              && [ a(1) a(2) b(3,q+3) b(2,q+2) \nonumber \\            
              &&  a(3) a(4) b(5,q+5) b(4,q+4) \nonumber \\
              &&  \vdots \nonumber \\
              &&  a(p-2) a(p-1) b(p,p+q) b(p-1,p+q-1) ] \nonumber \\              
              &&  a(p)
\end{eqnarray}
v) $l=1$, $p$ even, $q+1\leq p\leq 2q$, $V(w)=A^pB$, where {\color{\correctedMAu}  \\\phantom{v)} 
   $w=\underbrace{A}_{[(p-q)\wtimes]}\underbrace{AABB}_{[(q-p/2)\wtimes]}\underbrace{A}_{[(p-q)\wtimes]}B$.}
\begin{eqnarray}\label{1_v}
\vec{\gamma}_w&\to& a(1) a(2) \dots a(p-q) \nonumber \\
              && [ a(p-q+1) a(p-q+2) b(p-q+3, p+3) b(p-q+2, p+2) \nonumber \\            
              &&  a(p-q+3) a(p-q+4) b(p-q+5, p+5) b(p-q+4, p+4) \nonumber \\
              &&  \vdots \nonumber \\
              &&  a(q-1) a(q) b(q+1, 2q+1) b(q, 2q) ] \nonumber \\              
              &&  a(q+1) a(q+2) \dots a(p) \nonumber \\
              &&  b(p-q, q+1)
\end{eqnarray}
\clearpage
vi) $l=1$, $p$ even, $2\leq p\leq q$, $V(w)=A^pB$, where {\color{\correctedMAu}  \\\phantom{vi)} 
   $w=A\underbrace{AABB}_{[(p/2-1)\wtimes]}\underbrace{A}_{[(q-p+2)\wtimes]}B\underbrace{A}_{[(q-p+1)\wtimes]}$.}
\begin{eqnarray}\label{1_vi}
\vec{\gamma}_w&\to& a(1) \nonumber \\
              && [ a(2) a(3) b(4,q+4) b(3,q+3) \nonumber \\            
              &&  a(4) a(5) b(6,q+6) b(5,q+5) \nonumber \\
              &&  \vdots \nonumber \\
              &&  a(p-2) a(p-1) b(p, p+q) b(p-1, p+q-1) ] \nonumber \\              
              &&  a(p) a(p+1) \dots a(q+1) \nonumber \\
              &&  b(1, q+2) \nonumber \\
              &&  a(2q+1) a(2q) \dots a(p+q+1)
\end{eqnarray}
vii) $l=1$, $p$ odd, $q+2\leq p\leq 2q+1$, $V(w)=A^pB$, where {\color{\correctedMAu}  \\\phantom{vii)} 
   $w=\underbrace{A}_{[(p-q-1)\wtimes]}B\underbrace{A}_{[(p-q-1)\wtimes]}\underbrace{ABBA}_{[(q-(p-1)/2)\wtimes]}$.}
\begin{eqnarray}\label{1_vii}
\vec{\gamma}_w&\to& a(1) a(2) \dots a(p-q-1) \nonumber \\
              &&  b(p-q, p) \nonumber \\
              &&  a(p-1) a(p-2) \dots a(q+1) \nonumber \\
              &&  [a(q) b(q-1, 2q) b(q, 2q+1) a(q-1) \nonumber \\            
              &&  a(q-2) b(q-3, 2q-2) b(q-2, 2q-1) a(q-3) \nonumber \\
              &&  \vdots \nonumber \\
              &&  a(p-q+1) b(p-q, p+1) b(p-q+1, p+2) a(p-q)]                            
\end{eqnarray}
viii) $l=1$, $p$ odd, $1\leq p\leq q+1$, $V(w)=A^pB$, where {\color{\correctedMAu}  \\\phantom{viii)} 
   $w=B\underbrace{A}_{[(q-p+1)\wtimes]}\underbrace{ABBA}_{[((p-1)/2)\wtimes]}\underbrace{A}_{[(q-p+1)\wtimes]}$.}
\begin{eqnarray}\label{1_viii}
\vec{\gamma}_w&\to& b(1,q+1) \nonumber \\
              &&  a(q) a(q-1) \dots a(p) \nonumber \\
              &&  [ a(p-1) b(p-2, p+q-1) b(p-1, p+q) a(p-2) \nonumber \\            
              &&  a(p-3) b(p-4, p+q-3) b(p-3, p+q-2) a(p-4) \nonumber \\
              &&  \vdots \nonumber \\
              &&  a(2) b(1,q+2) b(2,q+3) a(1) ] \nonumber \\              
              &&  a(2q+1) a(2q) \dots a(p+q+1)
\end{eqnarray}
In the cases iv) - viii) we can obtain the matrices $M$ and $M'$ analogously as in the cases i), ii) and iii). Note that in each case {\color{\correctedTV} except for the first one,} the matrix $M'$ has exactly one non-zero entry in every row and in every column, which implies that $det(M) = \pm 1$.
\end{proof}

\begin{theorem}\label{th_1}
For {\color{\correctedTV} every} odd $k\geq3$ and for {\color{\correctedTV} every} even $d\geq2$ there exists a Cayley digraph of order $2k(\frac{d}{2})^k$, degree $d$ and diameter at most $k$.  
\end{theorem}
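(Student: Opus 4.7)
The plan is to instantiate the construction of Section~\ref{sec_preliminaries} with $H$ chosen to be any additive abelian group of order $n=d/2$, for example $H=\mathbb{Z}_{d/2}$. Setting $\Gamma=H^k\rtimes_\varphi D_k$ and $X=\{a(x),b(x)\mid x\in H\}$, the Cayley digraph $G=Cay(\Gamma,X)$ then has order $|\Gamma|=2kn^k=2k(d/2)^k$ and degree $|X|=2n=d$ immediately, as recorded just after equation~(\ref{eq_b_x}). What remains is to verify that the diameter of $G$ does not exceed $k$.

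Since $G$ is vertex-transitive, I would reduce the diameter bound to showing that every $g\in\Gamma$ lies within directed distance $k$ of the identity $1_\Gamma$. Write $g=\vec{z}\cdot C$ with $\vec{z}\in H^k$ and $C\in D_k$. Because $k$ is odd, Lemma~\ref{lem_eps_1} produces a word $w$ of length $k$ in the letters $A,B$ with $V(w)=C$ such that the fiber matrix $M=M(w)$ satisfies $\det M=\pm1$. Lemma~\ref{lem_regular_covers} then guarantees that $\vec{\gamma}_w$ covers $C$, so by Definition~\ref{def_cover} there exists $\vec{x}\in H^k$ with $M\vec{x}^T=\vec{z}^T$. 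The corresponding sequence $\vec{\gamma}_w(\vec{x})$ is a directed walk of length exactly $k$ starting at $1_\Gamma$ and ending at $V(\vec{\gamma}_w(\vec{x}))=\vec{z}\cdot C=g$. Hence every vertex is reachable from $1_\Gamma$ within $k$ steps, so the diameter is at most $k$.

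The principal obstacle in this argument has already been absorbed into the preceding material: the heavy lifting is the case-by-case construction in Lemma~\ref{lem_eps_1} of eight families of words, one for each type of element $A^pB^l\in D_k$, together with the verification that the associated fiber matrices are unimodular. Once that lemma plus the implication ``unimodular fiber matrix covers its suffix'' (Lemma~\ref{lem_regular_covers}) are available, the theorem itself reduces to combining the order and degree count with the covering implication via vertex-transitivity. I would therefore keep the exposition of this theorem short, essentially naming $H$, recording the order and degree, and then invoking Lemmas~\ref{lem_eps_1} and~\ref{lem_regular_covers} in the manner above.
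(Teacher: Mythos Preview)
Your proposal is correct and matches the paper's own proof essentially line for line: instantiate $H$ of order $n=d/2$, read off the order and degree of $Cay(\Gamma_k,X)$, and then invoke Lemma~\ref{lem_eps_1} followed by Lemma~\ref{lem_regular_covers} to conclude that every element of $\Gamma_k$ with any prescribed suffix $C$ is a product of $k$ generators. The only cosmetic difference is that you explicitly appeal to vertex-transitivity where the paper simply says it suffices that each element of $\Gamma_k$ be a product of at most $k$ generators.
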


\begin{proof}
Let $H$ be an abelian group of order $n\geq1$ and let $\Gamma_k=H^k\rtimes_\varphi D_k$. Let $G=Cay(\Gamma_k,X)$ be the Cayley digraph for the underlying group $\Gamma_k$ and for the generating set $X=\{a(x),b(x)\vert x\in H\}$
{\color{\correctedTV} introduced in Preliminaries.}
Since $\vert X\vert=2n$, the Cayley digraph $G$ has degree $d=2n$ and the order of $G$ is $2kn^k=2k(\frac{d}{2})^k$. To show that $G$ has diameter at most $k$ it is sufficient to show that every element of $\Gamma_k$ is a product of at most $k$ elements of $X$. Let $C\in D_k$. By Lemma \ref{lem_eps_1} for every $C\in D_k$ there is a word $w$ in $D_k$ such that the fiber matrix over $w$ has determinant equal to $\pm1$. By Lemma 
\ref{lem_regular_covers} the fiber over $w$ covers the element $C=V(w)$ of $D_k$, that is every element of $\Gamma_k$ with suffix $C$ is a product of $k$ elements of $X$. Therefore the diameter of $G$ is at most $k$.
\end{proof}

\begin{corollary}\label{cor_1}
For any odd $k\geq3$ and for any $d\geq2$ there exists a Cayley digraph of order $2k\left(\floor{\frac{d}{2}}\right)^k$, degree $d$ and diameter at most $k$.
\end{corollary}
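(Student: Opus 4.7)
The plan is to reduce the corollary to Theorem~\ref{th_1}, which already handles the case of even degree. For even $d \geq 2$, Theorem~\ref{th_1} directly yields a Cayley digraph of order $2k(d/2)^k = 2k(\lfloor d/2\rfloor)^k$, degree $d$, and diameter at most $k$, so nothing further is required in this case.

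For odd $d \geq 3$, I would first apply Theorem~\ref{th_1} to the even value $d-1$ to obtain a Cayley digraph $G' = Cay(\Gamma_k, X')$ with $|X'| = d-1$, of order $2k((d-1)/2)^k = 2k(\lfloor d/2\rfloor)^k$, and diameter at most $k$. I would then adjoin to $X'$ any single element $g \in \Gamma_k \setminus (X' \cup \{1_{\Gamma_k}\})$ to form a unit-free generating set $X = X' \cup \{g\}$ of cardinality $d$. The Cayley digraph $Cay(\Gamma_k, X)$ then has the same underlying group, hence the same order $2k(\lfloor d/2\rfloor)^k$, degree $d$, and diameter bounded above by the diameter of $G'$, since every directed walk of $G'$ remains a directed walk in $Cay(\Gamma_k, X)$; in particular this diameter is at most $k$.

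The only point requiring verification is the existence of a suitable element $g$, which amounts to checking $|\Gamma_k| = 2k(\lfloor d/2\rfloor)^k \geq d + 1$. This inequality is trivially satisfied for all $k \geq 3$ and $d \geq 3$, so there is no substantive obstacle; the corollary reduces in essentially one step to Theorem~\ref{th_1} together with the elementary observation that enlarging the generating set of a Cayley digraph can only decrease the diameter.
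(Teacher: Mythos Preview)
Your proof is correct and follows essentially the same approach as the paper: both handle odd $d$ by applying Theorem~\ref{th_1} to $d-1$ and then adjoining a single extra generator, noting that this cannot increase the diameter. You are slightly more careful in verifying that such an extra element actually exists in $\Gamma_k$, a point the paper leaves implicit.
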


\begin{proof}
Adding one additional generator to the generating set $X$ presented in the proof of Theorem \ref{th_1}, cannot increase the diameter. Therefore we obtain a Cayley digraph of degree $d=2n+1$, order $2kn^k=2k(\frac{d-1}{2})^k$ and diameter at most $k$. 
\end{proof}

\begin{theorem}[Main theorem for Cayley digraphs]\label{th_main_1}
Let $k\geq3$ be odd and let $d\geq\frac{3^k}{2k}+1$. Then $C_{d,k}\geq 2k\lrexp{\floor{\dhalf}}{k}$.
\end{theorem}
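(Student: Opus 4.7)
The plan is to combine the construction already produced in Corollary \ref{cor_1} with the Moore bound for digraphs. Corollary \ref{cor_1} supplies, for every odd $k\geq 3$ and every $d\geq 2$, a Cayley digraph $G$ of degree $d$, order $N := 2k\lrexp{\floor{\dhalf}}{k}$, and diameter at most $k$. All that remains is to show that under the hypothesis $d\geq \frac{3^k}{2k}+1$ the diameter of $G$ is in fact exactly $k$, so that $G$ witnesses the lower bound $C_{d,k}\geq N$ as claimed.

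First I would invoke the Moore bound: any digraph of (maximum) degree $d$ and diameter at most $k-1$ has at most $M_{d,k-1}=1+d+d^2+\cdots+d^{k-1}=\frac{d^k-1}{d-1}$ vertices. Consequently it suffices to verify $N > M_{d,k-1}$; then, because the diameter of $G$ is at most $k$ by Corollary \ref{cor_1} but $G$ has too many vertices to have diameter at most $k-1$, the diameter must equal $k$ exactly.

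Next I would carry out the elementary estimate. Using $\floor{\dhalf}\geq \frac{d}{3}$, which holds for every integer $d\geq 2$ (with equality at $d=3$), one obtains $N \geq 2k\lr{\frac{d}{3}}^k = \frac{2k\,d^k}{3^k}$, while $M_{d,k-1} < \frac{d^k}{d-1}$. Hence $N > M_{d,k-1}$ is implied by $\frac{2k\,d^k}{3^k}\geq \frac{d^k}{d-1}$, which rearranges to $2k(d-1)\geq 3^k$, i.e., $d\geq \frac{3^k}{2k}+1$---exactly the hypothesis.

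I do not anticipate a substantive obstacle: the serious work (the explicit generating set, the fiber-matrix machinery, and the case analysis in Lemma \ref{lem_eps_1} showing that every suffix in $D_k$ is realised by a length-$k$ word whose fiber matrix has determinant $\pm 1$) has already been done in Lemma \ref{lem_eps_1}, Theorem \ref{th_1}, and Corollary \ref{cor_1}. The only calibration needed is the crude estimate $\floor{\dhalf}\geq d/3$, which is responsible for the appearance of $3^k$ rather than $2^k$ in the threshold on $d$.
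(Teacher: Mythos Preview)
Your proof is correct and follows essentially the same strategy as the paper: invoke Corollary~\ref{cor_1} to obtain a Cayley digraph of order $N=2k\lfloor d/2\rfloor^k$ and diameter at most $k$, then compare $N$ against the Moore bound $M_{d,k-1}=\frac{d^k-1}{d-1}$ to force the diameter to be exactly $k$. The only difference is cosmetic: the paper splits into the cases $d$ even and $d$ odd (yielding the sharper threshold $d\geq\frac{2^k}{2k}+1$ in the even case and using $\frac{d}{d-1}\leq\frac{3}{2}$ in the odd case), whereas you treat both parities uniformly via the single estimate $\lfloor d/2\rfloor\geq d/3$.
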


\begin{proof}
We show that for $d\geq\frac{3^{k}}{2k}+1$, the Cayley digraphs described in the proofs of Theorem \ref{th_1} and Corollary \ref{cor_1} have 
diameter $k$. The maximum order of a digraph of degree $d$ and diameter $\tilde{k}\leq k-1$ is $M_{d,k-1}=1+d+d^2\dots+d^{k-1}=\frac{d^k-1}{d-1}$, therefore it is sufficient to show that for $d\geq\frac{3^k}{2k}+1$ the orders of the Cayley digraphs are greater than the Moore bound for digraphs of diameter $k-1$. That is, if $2k\lrexp{\floor{\dhalf}}{k}>\frac{d^k-1}{d-1}$, then the Cayley digraphs have diameter exactly $k$.
We distinguish two cases.\\
i) $d\geq2$ is even:\\
From the inequality $2k\lrexp{\dhalf}{k}>\frac{d^k-1}{d-1}$ we get $d>\frac{2^k}{2k}-\frac{1}{2k}\lrexp{\frac{2}{d}}{k}+1$.
Since $-\frac{1}{2k}\lrexp{\frac{2}{d}}{k}<0$, we obtain that if $d\geq \frac{2^k}{2k}+1$, then the Cayley digraph has diameter exactly $k$.\\
ii) $d\geq3$ is odd:\\ 
{\color{\correctedTV} From $2k\lrexp{\dmhalf}{k}>\frac{d^k-1}{d-1}$ we have $d>\lrexp{\frac{d}{d-1}}{k}\frac{2^k}{2k}-\frac{1}{2k}\lrexp{\frac{2}{d-1}}{k}+1$.
Clearly $-\frac{1}{2k}\lrexp{\frac{2}{d-1}}{k}<0$ and $\frac{d}{d-1}\leq\frac32$, thus if $d\geq \frac{3^k}{2k}+1$, the diameter of the Cayley digraph is exactly $k$.}
\end{proof}
\subsection{Bipartite Cayley digraphs}\label{subsec_bip_digraphs}
In the next lemma, for any element $A^pB$ ($A^{p+1}$) of the group $D_k$, $k$ even,\\ $p\in\{1,3,5,\dots,k-1\}$, we present a sequence of $k$ generators (a word in $D_k$) whose product is the element with suffix $A^pB$ ($A^{p+1}$) and with the corresponding matrix $M$ with determinant $\pm1$.

\begin{lemma}\label{lem_eps_2}
Let $q\geq2$ be an integer and let $k=2q$. Then for every $C\in\{A^pB,A^{p+1}\vert 1\leq p\leq k-1,\ p\ \mathit{odd}\}$, $C\in D_k$, there is a word $w$ in $D_k$ such that $V(w)=C$ and the fiber matrix $M(w)$ over the word $w$ is a matrix with $det(M)=\pm1$.
\end{lemma}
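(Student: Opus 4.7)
My plan is to mimic the strategy of Lemma~\ref{lem_eps_1}. The set of targets here is the $k$-element subset of $D_k$ consisting of the coset lying in the relevant part of the bipartition. For each target $C$ I want to exhibit an explicit word $w=C_1C_2\dots C_k$ with $V(w)=C$ whose fiber matrix $M(w)$ has determinant $\pm 1$; Lemmas~\ref{lem_regular_covers} and \ref{lem_one_x_cover} will then supply the covering property needed later in the bipartite analogue of Theorem~\ref{th_1}.

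First I would split the statement into four cases according to whether $C=A^{p+1}$ or $C=A^{p}B$, and within each whether $p\le q$ or $p\ge q+1$. In the odd case the building blocks used were $A$, $AABB$ and $ABBA$, chosen because $V(AABB)=V(ABBA)=A^2$ and because their fiber matrices reduce, after deleting type-I rows/columns, to permutation submatrices. For even $k=2q$ with $B=B_{q,0}$ the same blocks should remain the right primitives, since $B_{q,0}$ still pairs the indices $\{1,\dots,q\}$ with $\{q+1,\dots,k\}$ symmetrically. I would therefore attempt to write each $w$ in the skeleton form
\[
  w \;=\; (\text{prefix of $A$'s and possibly one $B$})\cdot(\text{string of $AABB$ or $ABBA$ blocks})\cdot(\text{suffix of $A$'s and possibly one $B$}),
\]
tuning the multiplicities so that the total value is $A^{p+1}$ or $A^{p}B$, and so that the number of $A$-generators plus twice the number of $B$-generators is $k$.

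Second, given such a word, I would analyze $M(w)$ exactly as in the proof of Lemma~\ref{lem_eps_1}. Each $a(i)$-generator contributes a column of type~I (a single entry equal to $1$), so I would iteratively remove a type-I column together with its unique non-zero row, shrinking $M$ to a square submatrix $M'$ whose rows and columns correspond only to the surviving $b$-coordinates. The goal is that $M'$ is a permutation matrix: this happens precisely when, for each $b(j,j')$ in $w$, exactly one of the coordinates $j$ or $j'$ is killed by an $a$-column deletion while the other survives, and the surviving coordinates are pairwise distinct.

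The step I expect to be the main obstacle is the case-by-case verification that the explicit words both evaluate to the intended $C$ and deliver $M'$ as a permutation matrix. Checking the product requires tracking the conjugation action of accumulated powers of $A$ on each $B$-factor, which for $B_{q,0}$ sends the pair $(1,q+1)$ to a pair $(i,i+q\bmod k)$ depending on the length of the prefix; these pairs must align correctly with the rows already consumed by the $a$-generators. The boundary values $p=1$ and $p=k-1$ will likely require separate ad hoc words because the generic skeleton degenerates there. I would proceed by first working out $q=2,3$ explicitly in the style of Example~\ref{ex_matrix} to extract the pattern, then present four subcases in the spirit of (\ref{1_i})--(\ref{1_viii}) and carry out the row/column elimination in each.
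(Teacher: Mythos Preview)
Your plan is essentially the paper's own proof: it splits into four cases according to whether $C=A^{p+1}$ or $C=A^{p}B$ and whether the exponent is small or large relative to $q$, writes each $w$ as a run of $A$'s, then a string of four-letter blocks, then another run of $A$'s (with a single $B$ at one or both ends in some cases), and analyzes $M(w)$ by deleting the $a(i)$-columns and their rows to leave a permutation matrix $M'$. Two minor remarks: the paper also uses the block $BBAA$ (not only $AABB$ and $ABBA$) in the two small-exponent cases, and the boundary values $p=1$, $p=k-1$ do \emph{not} require ad hoc words---the generic skeletons specialize correctly there because the repeat counts are allowed to be zero.
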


\begin{proof}
We distinguish four cases: i), ii) for $C=A^{p+1}$ and iii), iv) for $C=A^pB$. In the first two cases we will use the notation $p'=p+1$, $p'$ even.\\\\
i) $q\leq p'\leq 2q$, $V(w)=A^{p'}$, where {\color{\correctedMAu}  \\\phantom{i)} 
   $w=\underbrace{A}_{[(p'-q)\wtimes]}\underbrace{ABBA}_{[(2q-p'/2)\wtimes]}\underbrace{A}_{[(p'-q)\wtimes]}$.}
\begin{eqnarray}\label{2_i}
\vec{\gamma}_w&\to& a(1) a(2) \dots a(p'-q)  \nonumber \\
              && [ a(p'-q+1) b(p'-q+2,p+2) b(p'-q+1,p+1) a(p'-q+2) \nonumber \\            
              &&  a(p'-q+3) b(p'-q+4,p+4) b(p'-q+3,p+3) a(p'-q+4) \nonumber \\
              &&  \vdots \nonumber \\
              &&  a(q-1) b(q,2q) b(q-1, 2q-1) a(q) ] \nonumber \\
              &&  a(q+1) a(q+2) \dots a(p')
\end{eqnarray}
{\color{\correctedTV} It can be checked that, in (\ref{2_i}), non-zero entries}
of generators $a(i)$ appear in the rows $1, 2, \dots , p'$.
We remove from $M$ rows and columns in which these non-zero entries appear
to obtain a new square matrix $M'$ of degree $2q-p'$, where $det(M') = \pm det(M)$. Columns of $M'$ correspond to generators $b(j, j')$.
Note that the $j$-th row of $M$ does not appear in $M'$, and
the $j'$-th row of $M$ appears in $M'$. Hence
$M'$ has exactly one non-zero element in each row and in each column, which implies that  $det(M') = \pm1$.\\

{\color{\correctedTV} For the remaining cases we list the corresponding instances and then append a note of how they can be handled.}
\clearpage
ii) $2\leq p'\leq q-1$, $V(w)=A^{p'}$, where {\color{\correctedMAu}  \\\phantom{ii)} 
   $w=B\underbrace{A}_{[(q-p'+2)\wtimes]}\underbrace{BBAA}_{[(p'/2-1)\wtimes]}\underbrace{A}_{[(q-p')\wtimes]}B$.}
\begin{eqnarray}\label{2_ii}
\vec{\gamma}_w&\to& b(1,q+1) \nonumber\\
              &&  a(q) a(q-1) \dots a(p'-1)  \nonumber \\
              && [ b(p'-2, p'+q-2) b(p'-1, p'+q-1) a(p'-2) a(p'-3) \nonumber \\            
              &&  b(p'-4, p'+q-4) b(p'-3, p'+q-3) a(p'-4) a(p'-5) \nonumber \\
              &&  \vdots \nonumber \\
              &&  b(2,q+2) b(3, q+3) a(2) a(1) ] \nonumber \\
              &&  a(2q) a(2q-1) \dots a(p'+q+1) \nonumber \\
              && b(p',p'+q)              
\end{eqnarray}
iii) $1\leq p\leq q$, $V(w)=A^{p}B$, where {\color{\correctedMAu}  \\\phantom{iii)} 
   $w=B\underbrace{A}_{[(q-p+1)\wtimes]}\underbrace{BBAA}_{[((p-1)/2)\wtimes]}\underbrace{A}_{[(q-p)\wtimes]}$.}
\begin{eqnarray}\label{2_iii}
\vec{\gamma}_w&\to& b(1,q+1) \nonumber\\
              &&  a(q) a(q-1) \dots a(p)  \nonumber \\
              && [ b(p-1, p+q-1) b(p, p+q) a(p-1) a(p-2) \nonumber \\            
              &&  b(p-3, p+q-3) b(p-2, p+q-2) a(p-3) a(p-4) \nonumber \\
              &&  \vdots \nonumber \\
              &&  b(2,q+2) b(3, q+3) a(2) a(1) ] \nonumber \\
              &&  a(2q) a(2q-1) \dots a(p+q+1)
\end{eqnarray}
iv) $q+1\leq p\leq 2q-1$, $V(w)=A^{p}B$, where {\color{\correctedMAu}  \\\phantom{iv)} 
   $w=\underbrace{A}_{[(p-q)\wtimes]}\underbrace{AABB}_{[(q-(p+1)/2)\wtimes]}\underbrace{A}_{[(p-q+1)\wtimes]}B$.}
\begin{eqnarray}\label{2_iv}
\vec{\gamma}_w&\to& a(1) a(2) \dots a(p-q)  \nonumber \\
              && [ a(p-q+1) a(p-q+2) b(p-q+3,p+3) b(p-q+2,p+2) \nonumber \\            
              &&  a(p-q+3) a(p-q+4) b(p-q+5,p+5) b(p-q+4,p+4) \nonumber \\
              &&  \vdots \nonumber \\
              &&  a(q-2) a(q-1) b(q,2q) b(q-1, 2q-1) ] \nonumber \\
              &&  a(q) a(q+1) \dots a(p) \nonumber \\
              && b(p-q+1,p+1)
\end{eqnarray} 

{\color{\correctedTV} In a way similar to what was presented  in the case i)}
one can show that determinants of matrices {\color{\correctedTV} corresponding} to the products (\ref{2_ii}), (\ref{2_iii}) and (\ref{2_iv}) are 
$1$ or $-1$.
\end{proof}

\begin{theorem}\label{th_2}
For {\color{\correctedTV} every} odd $k\geq5$ and for {\color{\correctedTV} every} even $d\geq2$ there exists a bipartite Cayley digraph of order $2(k-1)\lrexp{\dhalf}{k-1}$, 
degree $d$ and diameter at most $k$.  
\end{theorem}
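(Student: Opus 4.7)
The plan is to reuse the semidirect product construction of Section \ref{sec_preliminaries} with exponent $k-1$ in place of $k$: let $H$ be an abelian group of order $n = d/2$, let $\Gamma = \Gamma_{k-1} = H^{k-1} \rtimes_\varphi D_{k-1}$, and let $G = Cay(\Gamma, X)$ with $X = \{a(x), b(x) : x \in H\}$. Then $|\Gamma| = 2(k-1)n^{k-1} = 2(k-1)(d/2)^{k-1}$ and $|X| = 2n = d$, so the stated order and degree are immediate, and it remains only to exhibit the bipartition and to bound the diameter.

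Writing $k-1 = 2q$, I would first argue that
\[
N = \{A^{2i} : 0 \leq i \leq q-1\} \cup \{A^{2i+1}B : 0 \leq i \leq q-1\}
\]
is an index-$2$ subgroup of $D_{k-1}$. Using the dihedral relation $BA^{r}B = A^{-r}$, closure of $N$ under multiplication reduces to a short four-case check, and $|N| = 2q = (k-1) = \tfrac{1}{2}|D_{k-1}|$. Neither $A = A^{1}$ (odd power of $A$, no $B$) nor $B = A^{0}B$ (even power of $A$, with $B$) lies in $N$, so every generator in $X$ has its suffix in the non-trivial coset. Therefore $\mathbb{H} = H^{k-1}\rtimes N$ is a subgroup of $\Gamma$ of index $2$ containing $1_\Gamma$, every edge of $G$ joins $\mathbb{H}$ with $\Gamma\setminus\mathbb{H}$, and $G$ is bipartite with these two parts.

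Next I would establish the diameter bound by matching $N$ against Lemma \ref{lem_eps_2}. A direct inspection shows that the set $\{A^{p+1}, A^{p}B : 1\le p\le k-2,\ p\text{ odd}\}$ treated by Lemma \ref{lem_eps_2} (with the lemma's internal parameter $k$ taken as our $k-1 = 2q$) coincides with $N$. Hence for every $g\in\mathbb{H}$, Lemma \ref{lem_eps_2} supplies a word $w$ of length $k-1$ with $V(w) = S(g)$ and fiber matrix of determinant $\pm 1$, and Lemma \ref{lem_regular_covers} then produces a walk of length $k-1$ from $1_\Gamma$ to $g$. For a target $g\in\Gamma\setminus\mathbb{H}$, set $g' = g\cdot a(0)^{-1}$; since $S(g)$ and $A^{-1}$ both lie outside the index-$2$ subgroup $N$, their product $S(g') = S(g)A^{-1}$ lies in $N$, so $g'\in\mathbb{H}$ is reachable in $k-1$ steps, and appending one edge labelled $a(0)$ yields a walk of length $k$ from $1_\Gamma$ to $g$.

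The main obstacle will be the first step: identifying the exact set of elements of $D_{k-1}$ covered by Lemma \ref{lem_eps_2} as a subgroup of index $2$ and checking that both $A$ and $B$ avoid it, so that the induced Cayley digraph is genuinely bipartite. Once this structural observation is in place, the diameter argument reduces to the trivial device of extending a walk by one extra $a(0)$-step, and no further combinatorial work on fiber matrices is needed.
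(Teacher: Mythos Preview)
Your proposal is correct and follows essentially the same approach as the paper: the same underlying group $\Gamma_{k-1}=H^{k-1}\rtimes_\varphi D_{k-1}$ and generating set $X$, the same appeal to Lemma~\ref{lem_eps_2} together with Lemma~\ref{lem_regular_covers} to cover one bipartition class by walks of length $k-1$, and the same ``one extra generator'' step to reach the other class in $k$ steps. The only cosmetic difference is that you phrase bipartiteness via the explicit index-$2$ subgroup $N\le D_{k-1}$ (and observe $A,B\notin N$), whereas the paper checks the same fact directly at the level of adjacencies; both arguments express the same parity obstruction on suffices.
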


\begin{proof}
Let $k'=k-1$, let $n\geq1$ be an integer, let $H$ be an {\color{\correctedTV} abelian group} of order $n$ and let $\Gamma_{k'}=H^{k'}\rtimes_\varphi D_{k'}$. Let $G=Cay(\Gamma_{k'},X)$ be the Cayley digraph for the underlying group $\Gamma_{k'}$ and for the generating set $X=\{a(x),b(x)\vert x\in H\}$
{\color{\correctedTV} introduced in Preliminaries.} Since $\vert X\vert=2n$, the Cayley digraph has degree $d=2n$ and the order of the Cayley digraph is $2k'n^{k'}=2k'(\frac{d}{2})^{k'}$. To show that $G$ has diameter at most $k$ it is sufficient to show that every element of $\Gamma_{k'}$ is a product of at most $k$ elements of $X$. 
Let $S=\{A^pB,A^{p+1}\vert 1\leq p\leq k',\ p\ \mathit{odd}\}$ and let $C\in S$.
By Lemma \ref{lem_eps_2} for every $C\in S$ there is a word $w$ in $D_{k'}$ such that the fiber matrix over $w$ has determinant equal to $\pm1$. By Lemma \ref{lem_regular_covers} the fiber over $w$ covers the element $C=V(w)$ of $S$, that is every element of $\Gamma_{k'}$ with suffix $C\in S$ is a product of $k'$ elements of $X$. 
Since generators from $X$ have suffices either $A$ or $B$, no vertex with suffix $A^l$ is adjacent to the vertex with suffix $A^m B$ if one of $l, m$ is even and the other one is odd. Also no vertex with suffix $A^l B^i$ is adjacent to the vertex with suffix $A^m B^i$ for $i=0$ or $1$, if both, $l, m$ are even, or if both $l, m$ are odd. Therefore 
{\color{\correctedTV} the}
Cayley digraph $G$ is bipartite. Note that in a bipartite Cayley digraph, if one can obtain any vertex in one {\color{\correctedTV} part} as a product of $k'$ generators, then all vertices in the other {\color{\correctedTV} part} can be obtained as products of (at most) $k'+1=k$ generators. Therefore the diameter of $G$ is at most $k$.
\end{proof}

\begin{corollary}\label{cor_2}
For {\color{\correctedTV} every} odd $k\geq5$ and for {\color{\correctedTV} every} $d\geq2$ there exists a bipartite Cayley digraph of 
order $2(k-1)\lrexp{\floor{\dhalf}}{k-1}$, degree $d$ and diameter at most $k$.
\end{corollary}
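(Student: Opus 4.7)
The plan is to mimic the strategy of Corollary~\ref{cor_1}: for even $d$, invoke Theorem~\ref{th_2} directly, while for odd $d$, augment the generating set from that theorem by one extra generator, this time being careful to preserve the bipartite structure.

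If $d\ge2$ is even, Theorem~\ref{th_2} already yields a bipartite Cayley digraph of order $2(k-1)(d/2)^{k-1}=2(k-1)\floor{\dhalf}^{k-1}$, degree $d$, and diameter at most $k$. If $d\ge3$ is odd, I would write $d=2n+1$, so that $n=\floor{\dhalf}$, and start from the bipartite Cayley digraph $G_0=Cay(\Gamma_{k-1},X)$ constructed in the proof of Theorem~\ref{th_2} using an abelian group $H$ of order $n$. This $G_0$ has order $2(k-1)n^{k-1}=2(k-1)\floor{\dhalf}^{k-1}$, degree $2n$, and diameter at most $k$. I would then adjoin a single element $g^{*}\notin X$ to form $X'=X\cup\{g^{*}\}$, obtaining $G=Cay(\Gamma_{k-1},X')$. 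Since $G$ has the same underlying group as $G_0$, its order is unchanged, its degree is $2n+1=d$, and its diameter remains at most $k$ because enlarging the generating set cannot increase distances.

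The only point requiring care is bipartiteness. The bipartition of $G_0$ is witnessed by the parity homomorphism $\pi:\Gamma_{k-1}\to\mathbb{Z}_2$ defined, via the projection to $D_{k-1}$, by $\vec{h}\cdot A^l B^i\mapsto l+i\pmod 2$; this is well-defined precisely because $k-1$ is even (so the relation $A^{k-1}=1$ is respected), and under $\pi$ every generator in $X$ has image $1$, so $\pi^{-1}(0)$ and $\pi^{-1}(1)$ form a valid bipartition. To keep $G$ bipartite, I would therefore require $\pi(g^{*})=1$. Since $k-1\ge 4$, the element $g^{*}=\vec{0}\cdot A^3\in\Gamma_{k-1}$ is not in $X$ (no element of $X$ has suffix $A^3$) and satisfies $\pi(g^{*})=3\equiv1\pmod 2$, so it is an admissible choice. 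The main (mild) obstacle is just verifying that $\pi$ is genuinely a homomorphism on the semidirect product $\Gamma_{k-1}=H^{k-1}\rtimes_\varphi D_{k-1}$, which reduces to the standard fact that the abelianization of $D_{k-1}$ for even $k-1$ admits a $\mathbb{Z}_2$ quotient sending both $A$ and $B$ to $1$.
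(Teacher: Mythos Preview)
Your proposal is correct and follows essentially the same strategy as the paper: start from the even-degree bipartite Cayley digraph of Theorem~\ref{th_2} and adjoin a single extra generator whose suffix has odd parity (the paper picks one with suffix $A$, you pick $\vec{0}\cdot A^3$), so that bipartiteness is preserved while the diameter cannot increase. Your explicit verification via the parity homomorphism $\pi$ makes the bipartiteness argument more transparent than the paper's, and your choice $g^{*}=\vec{0}\cdot A^{3}$ even sidesteps the minor edge case $n=1$, where no new element with suffix $A$ is available.
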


\begin{proof}
We add one additional generator with suffix $A$ to the generating set $X$ given in the proof of Theorem \ref{th_2} to obtain a Cayley digraph which is still bipartite and has the diameter at most $k$. 
{\color{\correctedTV} We thus obtain}
a bipartite Cayley digraph of degree $d=2n+1$, 
order $2(k-1)n^{k-1}=2(k-1)\lrexp{\floor{\dhalf}}{k-1}$ and diameter at most $k$. 
\end{proof}

\begin{theorem}[Main theorem for bipartite Cayley digraphs]\label{th_main_2}
Let $k\geq5$ be odd and let $d\geq\frac{3^{k-1}}{k-1}+1$. Then $BC_{d,k}\geq 2(k-1)\lrexp{\floor{\dhalf}}{k-1}$.
\end{theorem}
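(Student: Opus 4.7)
The plan is to follow the template of Theorem \ref{th_main_1}: start from the bipartite Cayley digraph produced by Theorem \ref{th_2} (combined with Corollary \ref{cor_2} for odd $d$), whose order is $N=2(k-1)\lrexp{\floor{\dhalf}}{k-1}$ and whose diameter is at most $k$, and then show that the hypothesis $d\ge \frac{3^{k-1}}{k-1}+1$ forces the diameter to equal $k$ by showing $N$ strictly exceeds the maximum order of any bipartite Cayley digraph of degree $d$ and diameter at most $k-1$.

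The crucial difference from Theorem \ref{th_main_1} is that the general Moore bound $M_{d,k-1}=\frac{d^k-1}{d-1}$ is no longer useful: for $k\ge 5$ the leading ratio $(k-1)/2^{k-2}$ is already less than $1$, so $N$ cannot exceed $M_{d,k-1}$ for any $d$. I would therefore invoke the bipartite Moore bound of Aider \cite{A1992}. Since $k-1$ is even,
\[
M^{\mathrm{bip}}_{d,k-1}=2d(1+d^2+\dots+d^{k-3})=\frac{2d(d^{k-1}-1)}{d^2-1},
\]
and because $\frac{d}{d+1}\le 1$, this quantity is dominated by the cleaner bound $\frac{2(d^{k-1}-1)}{d-1}$. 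Thus it is enough to prove $N>\frac{2(d^{k-1}-1)}{d-1}$.

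The remainder is a parity case split in direct analogy with Theorem \ref{th_main_1}. For $d$ even, $N=(k-1)d^{k-1}/2^{k-2}$; clearing denominators reduces the inequality to $(k-1)(d-1)>2^{k-1}(1-d^{-(k-1)})$, which is implied by the weaker condition $d\ge \frac{2^{k-1}}{k-1}+1$. For $d$ odd, $N=(k-1)(d-1)^{k-1}/2^{k-2}$, and after simplification the inequality takes the form $(k-1)(d-1)\lrexp{\frac{d-1}{d}}{k-1}>2^{k-1}(1-d^{-(k-1)})$. Exactly as in the odd-$d$ case of Theorem \ref{th_main_1}, I would apply $\frac{d}{d-1}\le\frac{3}{2}$ for $d\ge 3$, so that it suffices to have $(k-1)(d-1)\ge 2^{k-1}(3/2)^{k-1}=3^{k-1}$, i.e.\ $d\ge \frac{3^{k-1}}{k-1}+1$. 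Since this is the stronger of the two parity thresholds, the single hypothesis of the theorem handles both cases simultaneously.

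The main obstacle I expect is the very first step: one must recognise that the naive parallel with Theorem \ref{th_main_1} breaks down and that the bipartite structure of the construction must be exploited. Once $M^{\mathrm{bip}}_{d,k-1}$ is loosened to the clean upper bound $\frac{2(d^{k-1}-1)}{d-1}$, the $(3/2)^{k-1}$ estimate then produces exactly the stated threshold $\frac{3^{k-1}}{k-1}+1$, mirroring the Cayley-digraph case of Theorem \ref{th_main_1}.
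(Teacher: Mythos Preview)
Your proposal is correct and follows essentially the same approach as the paper: invoke the bipartite Moore bound $\mb{k-1}=\frac{2d(d^{k-1}-1)}{d^2-1}$ (since $k-1$ is even), then split into the even and odd $d$ cases and use $\frac{d}{d-1}\le\frac{3}{2}$ in the odd case to reach the threshold $\frac{3^{k-1}}{k-1}+1$. The only cosmetic difference is that you first loosen $\mb{k-1}$ to $\frac{2(d^{k-1}-1)}{d-1}$ via $\frac{d}{d+1}\le 1$ before the case split, whereas the paper manipulates the tighter expression directly; both routes yield identical thresholds.
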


\begin{proof}
Let $d\ge \frac{3^{k-1}}{k-1}+1$. 
Let us prove that the bipartite Cayley digraphs described in the proofs of Theorem \ref{th_2} and Corollary \ref{cor_2} are of diameter $k$. 
Since $k$ is odd,
the order of a bipartite digraph of diameter $\tilde{k}\leq k-1$ cannot exceed the bound 
$\mb{k-1}=2d(1+d^2+d^4+\dots+d^{k-2})=2d\frac{d^{k-1}-1}{d^2-1}$. Thus it suffices to show that if 
$d\geq\frac{3^{k-1}}{k-1}+1$, then the orders of our bipartite Cayley digraphs are greater than the Moore bound for bipartite digraphs of diameter $k-1$. That is, if $2(k-1)\lrexp{\floor{\dhalf}}{k-1}>2d\frac{d^{k-1}-1}{d^2-1}$, then the Cayley digraphs are of diameter exactly $k$.
Let us consider two cases.\\
i) $d\geq2$ is even:\\
From $2(k-1)\lrexp{\dhalf}{k-1}>2d\frac{d^{k-1}-1}{d^2-1}$ we have $d>\frac{2^{k-1}}{k-1}-\frac{1}{k-1}\lrexp{\frac{2}{d}}{k-1}+\frac{1}{d}$.
Since $-\frac{1}{k-1}\lrexp{\frac{2}{d}}{k-1}<0$ and $\frac{1}{d}<1$, we get that if $d\geq \frac{2^{k-1}}{k-1}+1$, then the diameter of the Cayley digraph is exactly $k$.\\
ii) $d\geq3$ is odd:\\
From the inequality $2(k-1)\lrexp{\dmhalf}{k-1}>2d\frac{d^{k-1}-1}{d^2-1}$ we get $d>\lrexp{\frac{d}{d-1}}{k-1}\frac{2^{k-1}}{k-1}-\frac{1}{k-1}\lrexp{\frac{2}{d-1}}{k-1}+\frac{1}{d}$.
Clearly $-\frac{1}{k-1}\lrexp{\frac{2}{d-1}}{k-1}<0$, $\frac{1}{d}<1$ and $\frac{d}{d-1}\leq\frac32$, thus if $d\geq \frac{3^{k-1}}{k-1}+1$, then the Cayley digraph is of diameter exactly $k$.
\end{proof}
\section{{\color{\correctedTV} Conclusion}}\label{conclusion}

In this paper we construct the largest known Cayley digraphs of odd diameter and large degree using semidirect products
$\Gamma_k=H^k\rtimes_\varphi D_k$.
Simpler semidirect products $H^k\rtimes_\varphi Z_k$ have been used before.
It would be interesting to consider $H^k\rtimes_\varphi P_k$ for various subgroups $P_k$ of $Sym(k)$. The most difficult part in this research is often proving that the diameter of a Cayley digraph is exactly/at most $k$.

Note that the product $H^k\rtimes_\varphi Sym(k)$ cannot be used to construct Cayley digraphs $C(H^k\rtimes_\varphi Sym(k), X)$ of degree $d$ and diameter $k$ unless the genarating set $X$ is quite large (which would decrease the order of a Cayley digraph in terms of $d$ and $k$).
For example if $|X| = cn$ for some $c > 0$, then the order of $C(H^k\rtimes_\varphi Sym(k), X)$
is $k!n^k = k! \frac{d^k}{c^k}$. Since the order of a Cayley digraph of degree $d$ and diameter $k$ cannot exceed the Moore bound $1 + d + d^2 + \dots + d^k$, it follows that for $d \to \infty$ one must have $\frac{k!}{c^k} \le 1$.

Let us also mention that all known construction of Cayley digraphs are very far from the theoretical upper 
{\color{\correctedTV} bound, leaving space}
for future research.

\section*{Acknowledgements}
The authors would like to express their gratitude to the referees for all the valuable and constructive comments.

The research of the first author was supported by VEGA Research Grant No. 1/0811/14 and by the Operational
Programme ‘Research \& Development’ funded by the European Regional Development Fund through
implementation of the project ITMS 26220220179.
Research of the second author was supported by the National Research Foundation of South Africa, Grant numbers: 91499, 90793.
\section*{References}

\end{document}